\newtheorem{theorem}{Theorem}[section]
\newtheorem{proposition}[theorem]{Proposition}
\newtheorem{corollary}[theorem]{Corollary}
\theoremstyle{definition}
\newtheorem*{remark}{Remark}
\newcommand{\R}{\mathbb{R}}
\newcommand{\C}{\mathbb{C}}
\renewcommand{\H}{\mathbb{H}}
\newcommand{\Circle}{\mathcal{C}}
\newcommand{\Ball}{\mathcal{B}}
\newcommand{\Halfdisk}{\mathcal{D}}
\newcommand{\Halfcircle}{\mathcal{C}}
\newcommand{\bd}{\partial}
\begin{document}

\title{Intersection probabilities for a chordal SLE path and a semicircle}

\author{
Tom Alberts\\
{\small Courant Institute of Mathematical Sciences, 251 Mercer St.}\\
{\small New York University, New York, NY 10012 USA}\\
{\small \texttt{alberts@courant.nyu.edu}}
\and
Michael J.~Kozdron\footnote{Research supported in part by the Natural Sciences and Engineering Research Council of Canada.}\\
{\small Department of Mathematics \& Statistics, College West 307.14}\\
{\small University of Regina, Regina, SK S4S 0A2 Canada}\\
{\small \texttt{kozdron@stat.math.uregina.ca}}
}

\date{March 7, 2008}

\maketitle

\begin{abstract}
We derive a number of estimates for the probability that a chordal
SLE$_\kappa$ path in the upper half plane $\H$ intersects a semicircle
centred on the real line. We prove that if $0<\kappa <8$ and $\gamma:[0,\infty) \to \overline{\H}$ is a chordal
SLE$_\kappa$ in $\H$ from $0$ to $\infty$, then $P\{\gamma[0,\infty)
\cap \Halfcircle(x;rx) \neq \emptyset\} \asymp r^{4a-1}$ where
$a=2/\kappa$ and $\Halfcircle(x;rx)$ denotes the semicircle centred
at $x>0$ of radius $rx$, $0<r\le 1/3$, in the upper half plane. As
an application of our results, for $0<\kappa<8$, we derive an
estimate for the diameter of a chordal SLE$_\kappa$ path in $\H$
between two real boundary points 0 and $x>0$.  For $4<\kappa<8$, we
also estimate  the probability that an entire semicircle on the real line is swallowed at once by a  chordal SLE$_\kappa$ path in $\H$ from $0$ to $\infty$.
\end{abstract}

\noindent \emph{2000 Mathematics Subject Classification.} 82B21, 60K35, 60G99, 60J65\\

\noindent \emph{Key words and phrases.} Schramm-Loewner evolution, restriction property, Hausdorff dimension, swallowing time, intersection probability, Schwarz-Christoffel transformation.

\section{Introduction}

The primary purpose of this paper is to derive estimates for the probability that
a chordal SLE path in $\mathbb{H}$ from $0$ to $\infty$ intersects a
semicircle in the upper half plane centred at a fixed point $x>0$ on the real line. Specifically, suppose that  $\gamma:[0,\infty) \to \overline{\H}$ is a chordal SLE$_\kappa$ in $\H$ from $0$ to $\infty$ where $\kappa \in (0,8)$.  For $\epsilon>0$ and $x\in\R$,
denote the semicircle of radius $\epsilon$ centred at $x$ in the upper half plane by $\Halfcircle(x; \epsilon)$.  In the  $0 < \kappa < 8$ regime, we will derive estimates for the intersection probability
$$P\{\gamma[0,\infty) \cap \Halfcircle(x; rx) \neq \emptyset\}$$
where  $0 < r \leq 1/3$ and $x>0$.
An example is shown in Figure~\ref{figure1.new}.

We conclude the introduction with the statement of our primary theorems. Most of this paper is devoted to their proof. In the final sections we give some applications of our results. Recall that $g(r) \asymp h(r)$ if there exist non-zero, finite constants $c_1$ and $c_2$ such that $c_1 h(r) \le g(r) \le c_2 h(r)$. Furthermore, $g(r) \sim h(r)$ if $g(r)/h(r) \to 1$ as $r \downarrow 0$.

\begin{figure}[h]
\begin{center}
\includegraphics[height=1.2in]{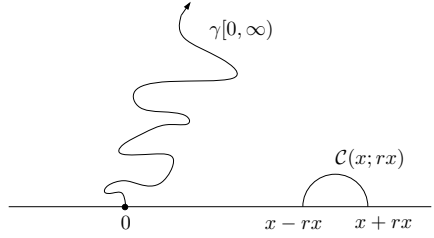}
\caption{The event $\{\gamma[0,\infty) \cap \Halfcircle(x; rx) = \emptyset\}$ in the $0 < \kappa \le 4$ case.}\label{figure1.new}
\end{center}
\vspace{-.25in}
\end{figure}

\begin{theorem}\label{the.theorem}
Suppose  $x>0$ is a real number,  $0< r \leq 1/3$, and
$\Halfcircle(x; rx) = \{x + rxe^{i\theta}: 0 < \theta < \pi\}$
 denotes the semicircle of radius $rx$ centred at $x$ in the upper half plane.
Suppose further that $\gamma:[0,\infty) \to \overline{\H}$ is a chordal SLE$_\kappa$ in $\H$ from $0$ to $\infty$.
\begin{description}
\item[(a)]  If $0<\kappa<8$, then
$P\{\gamma[0,\infty) \cap \Halfcircle(x; rx) \neq \emptyset\} \asymp  r^{\frac{8-\kappa}{\kappa}}$.
\item[(b)] If $\kappa=8/3$, then
$P\{\gamma[0,\infty) \cap \Halfcircle(x; rx) \neq \emptyset\} = 1- (1-r^{2})^{5/8}$.
\end{description}
 \end{theorem}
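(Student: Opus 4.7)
For part (b), I would invoke the conformal restriction property of SLE$_{8/3}$. Scaling reduces us to the case $x=1$, and I let $A=\overline{\Halfdisk(1;r)}$ denote the closed half-disk of radius $r$ centred at $1$. Because $\kappa=8/3<4$ the SLE$_{8/3}$ trace is a simple curve meeting $\R$ only at $0$, so $\{\gamma\cap\Halfcircle(1;r)\neq\emptyset\}=\{\gamma\cap A\neq\emptyset\}$. The restriction formula of Lawler--Schramm--Werner then gives $P\{\gamma\cap A=\emptyset\}=\Phi_A'(0)^{5/8}$, where $\Phi_A:\H\setminus A\to\H$ is the conformal map normalised by $\Phi_A(0)=0$, $\Phi_A(\infty)=\infty$, and $\Phi_A(z)/z\to 1$ as $z\to\infty$. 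A Joukowski-type construction yields the explicit formula $\Phi_A(z)=z+r^2+r^2/(z-1)$, so that $\Phi_A'(0)=1-r^2$ and the identity in part (b) follows immediately.

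For part (a) the strategy is to translate the geometric event into a Bessel hitting problem. Scaling again reduces to $x=1$. Let $g_t$ be the Loewner flow with driving function $W_t=\sqrt{\kappa}B_t$, and set $X_t=g_t(1)-W_t$, so that $X_t/\sqrt{\kappa}$ is the usual Bessel process of dimension $1+4/\kappa$. The quantity $F_t=X_t/g_t'(1)$ is, by the boundary Koebe distortion, comparable to $\mathrm{dist}(1,K_t)$, where $K_t$ is the SLE hull at time $t$. A short It\^o computation combined with the time change $du=g_t'(1)^{-2}\,dt$ shows that, viewed as a function of $u$, the rescaled process $F/\sqrt{\kappa}$ is a Bessel process of dimension $d=1+8/\kappa$ started from $1/\sqrt{\kappa}$. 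Since $0<\kappa<8$ implies $d>2$, this process is transient, and optional stopping applied to the local martingale $F^{1-8/\kappa}$ (truncated at an upper barrier that is then sent to infinity) gives that the probability $F$ ever descends below a level $\rho<1$ equals exactly $\rho^{(8-\kappa)/\kappa}$.

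For $0<\kappa\leq 4$ the trace coincides with the hull and avoids $\R\setminus\{0\}$, so $\{\gamma\cap\Halfcircle(1;r)\neq\emptyset\}=\{\mathrm{dist}(1,\gamma[0,\infty))\leq r\}$, and the Koebe--Bessel dictionary above immediately yields both sides of the $\asymp r^{(8-\kappa)/\kappa}$ bound up to multiplicative constants. The main obstacle is the regime $4<\kappa<8$: the path now touches $\R$ and can enter the half-disk through the real segment $(1-r,1+r)$ without ever meeting $\Halfcircle(1;r)$, so the semicircle event is strictly smaller than the half-disk event. The upper bound is still immediate, since a crossing of $\Halfcircle(1;r)$ forces $F_t$ to reach a value of order $r$. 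For the lower bound one would argue that on the event $F_t\leq c r$ for a sufficiently small constant $c$, the SLE tip lies close to $1$, and by the domain Markov property of SLE one can restart the process inside a well-chosen subdomain and produce a crossing of $\Halfcircle(1;r)$ with uniformly positive conditional probability, thereby closing the gap up to multiplicative constants.
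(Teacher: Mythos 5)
Your part~(b) is correct and is essentially the paper's own argument: scaling to $x=1$, the restriction formula applied to the half-disk hull, and the explicit map $\Phi_A(z)=z+r^2+r^2/(z-1)$ with $\Phi_A'(0)=1-r^2$.

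For part~(a) you take a genuinely different route from the paper (a Bessel analysis of $F_t=(g_t(1)-W_t)/g_t'(1)$ rather than the paper's combination of Beffara's one-point estimate, the exact interval-hitting formula, and Schramm's left-passage formula), and your diffusion computation is right: after the time change, $F/\sqrt{\kappa}$ is a Bessel process of dimension $1+8/\kappa>2$, and the probability it ever falls below $\rho<1$ is $\rho^{(8-\kappa)/\kappa}$. The gap is in the ``Koebe--Bessel dictionary.'' The boundary Koebe argument gives only \emph{one} inequality: since $g_t$ extends conformally (by Schwarz reflection) to $\Ball(1;d_t)$ with $d_t=\mathrm{dist}(1,K_t)$, and $W_t$ is not in the image of that ball, the $1/4$-theorem yields $g_t(1)-W_t\ge d_t\,g_t'(1)/4$, i.e.\ $d_t\le 4F_t$. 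The reverse inequality $F_t\le C\,d_t$ is \emph{false} for general hulls: for the vertical slit $K=[0,iM]$ one has $\mathrm{dist}(1,K)=1$ while $F=(g(1)-W)/g'(1)=1+M^2$. The direction Koebe does give you produces the \emph{lower} bound for $0<\kappa\le 4$ (if $F$ ever drops below $r/4$ then the curve enters $\overline{\Halfdisk(1;r)}$ and hence crosses the semicircle), whereas your ``immediate'' \emph{upper} bound --- that a crossing of $\Halfcircle(1;r)$ forces $F_t\lesssim r$ --- is precisely the direction Koebe does not supply. A comparison of that type can be rescued at the first hitting time of the semicircle, when the tip realizes the distance, but that is a localization/distortion lemma requiring proof; as written, the upper bound is unproved for every $\kappa$.

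Two further issues in the $4<\kappa<8$ lower bound. The event $\{F_t\le cr\}$ only gives $\mathrm{dist}(1,\gamma[0,t])\le 4cr$, not that the \emph{tip at time $t$} is near $1$: the curve may have approached $1$ at an earlier time (when $F$ was still large, which is possible since small distance does not force small $F$) and then wandered off, so the domain Markov restart must be performed at a carefully chosen stopping time, and the ``uniformly positive conditional probability'' of completing a crossing is itself a quantitative lemma you have not supplied. The paper avoids all of this: its lower bound for $4<\kappa<8$ uses the exact formula for $P\{\gamma[0,\infty)\cap[x-rx,x+rx]\neq\emptyset\}$ (Proposition~6.34 of Lawler's book), which already carries the exponent $4a-1$, and its upper bound covers the semicircle by a geometric chain of balls and applies Beffara's interior one-point estimate to each. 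I would either import one of those two ingredients or prove the missing distortion lemma before the Bessel argument can be considered complete.
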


\begin{remark}
The only reason that we distinguish $\kappa=8/3$ is because an exact computation is possible for this case.  Of course, we can immediately conclude from~(b) that if $\kappa=8/3$, then
$$P\{\gamma[0,\infty) \cap \Halfcircle(x; rx) \neq \emptyset\} \sim \frac{5}{8} r^{2}$$
as $r \downarrow 0$, which is consistent with Theorem~\ref{the.theorem}~(a).
\end{remark}

By an appropriate series of conformal transformations, an equivalent formulation of Theorem~\ref{the.theorem} gives an estimate for the diameter of a chordal SLE$_\kappa$ path in $\H$ from $0$ to $x>0$. This is illustrated in Figure~\ref{figure2.new}.

\begin{figure}[h]
\begin{center}
\includegraphics[height=1.1in]{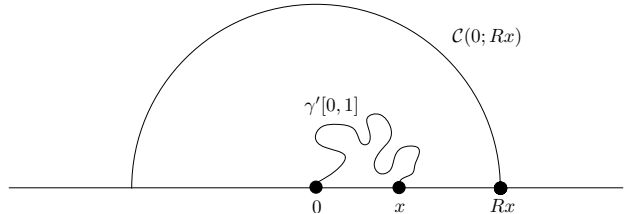}
\caption{The event $\{\gamma'[0,1] \cap \Halfcircle(0;Rx) = \emptyset\}$ in the $0 < \kappa \le 4$ case.}\label{figure2.new}
\end{center}
\end{figure}

\vspace{-.25in}

\begin{corollary}\label{the.theorem.cor}
Suppose $x>0$ is a real number, $R \ge 3$, and $\Halfcircle(0; Rx) = \{Rxe^{i\theta}: 0 < \theta < \pi\}$ denotes the circle of radius $Rx$ centred at $0$ in the upper half plane.
Suppose further that $\gamma':[0,1] \to \overline{\H}$ is a chordal SLE$_\kappa$ in $\H$ from $0$ to $x$.
\begin{description}
\item[(a)]  If $0<\kappa<8$, then
$P\{\gamma'[0,1] \cap \Halfcircle(0; Rx) \neq \emptyset\} \asymp  R^{\frac{\kappa-8}{\kappa}}$.
\item[(b)] If $\kappa=8/3$, then
$P\{\gamma'[0,1] \cap \Halfcircle(0; Rx) \neq \emptyset\} = 1- (1-R^{-2})^{5/8}$.
\end{description}
\end{corollary}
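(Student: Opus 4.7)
The plan is to deduce the corollary from Theorem~\ref{the.theorem} by conformal invariance, since the corollary differs from the theorem only in the location of the SLE target and the circle. A single Möbius change of coordinates of $\H$ should carry the setup of the corollary onto that of the theorem.

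First I would introduce the Möbius automorphism
$$\psi(z) = \frac{z}{x-z}$$
of $\H$ that sends $0 \mapsto 0$ and $x \mapsto \infty$; checking that $\psi(iy) = (-y^2 + ixy)/(x^2+y^2)$ has positive imaginary part confirms that $\psi$ preserves $\H$. By the conformal invariance of chordal SLE$_\kappa$, if $\gamma':[0,1]\to\overline{\H}$ is chordal SLE$_\kappa$ in $\H$ from $0$ to $x$, then $\gamma := \psi\circ\gamma'$, up to time reparametrization, is chordal SLE$_\kappa$ in $\H$ from $0$ to $\infty$. Since the events in question depend only on traces, this gives
$$P\{\gamma'[0,1] \cap \Halfcircle(0;Rx)\neq\emptyset\} = P\{\gamma[0,\infty) \cap \psi(\Halfcircle(0;Rx))\neq\emptyset\}.$$

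Next I would identify $\psi(\Halfcircle(0;Rx))$. Since $\psi$ maps generalized circles to generalized circles, preserves $\R\cup\{\infty\}$, and preserves $\H$, the image is a semicircle in $\overline{\H}$ with diameter on $\R$ whose endpoints are $\psi(Rx) = -R/(R-1)$ and $\psi(-Rx) = -R/(R+1)$. A short calculation then gives
$$\psi(\Halfcircle(0;Rx)) = \Halfcircle(x_0;\rho), \quad x_0 = -\frac{R^2}{R^2-1}, \quad \rho = \frac{R}{R^2-1},$$
so in particular $x_0<0$ and $\rho/|x_0| = 1/R$.

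Finally I would invoke the reflection symmetry $z\mapsto -z$ of chordal SLE$_\kappa$ from $0$ to $\infty$ in $\H$ to replace the semicircle centered at $x_0<0$ with the semicircle $\Halfcircle(-x_0;\rho)$ centered at $-x_0>0$ of the same radius, and then apply Theorem~\ref{the.theorem} with $-x_0$ in place of $x$ and $r = 1/R$. The assumption $R\geq 3$ corresponds precisely to $r\leq 1/3$, which is the range covered by Theorem~\ref{the.theorem}. Part~(a) of the theorem yields $r^{(8-\kappa)/\kappa} = R^{(\kappa-8)/\kappa}$, and part~(b) yields $1-(1-r^2)^{5/8} = 1-(1-R^{-2})^{5/8}$, as required. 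The argument is mostly bookkeeping once the Möbius map is chosen; the only points requiring care are the explicit computation of $\psi(\Halfcircle(0;Rx))$ and the use of reflection symmetry to reduce to Theorem~\ref{the.theorem}, whose statement is phrased for a semicircle centered on the positive real axis.
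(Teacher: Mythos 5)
Your proposal is correct and follows essentially the same route as the paper: the paper uses the M\"obius map $h(z)=\frac{R^2}{R^2-1}\frac{z}{x-z}$, which is your $\psi$ rescaled by a positive constant so that the image circle is exactly $\Halfcircle(-1;1/R)$, and then likewise invokes reflection symmetry about the imaginary axis and Theorem~\ref{the.theorem} with $r=1/R$. The extra scalar is immaterial since Theorem~\ref{the.theorem} is stated for an arbitrary centre $x>0$ with radius $rx$, so your bookkeeping with $x_0=-R^2/(R^2-1)$, $\rho=R/(R^2-1)$, $\rho/|x_0|=1/R$ is a valid variant of the same argument.
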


The outline of the remainder of the paper is as follows.  In Section~\ref{Sect.notation}, we introduce some notation. The proof of Theorem~\ref{the.theorem} is then given in Section~\ref{Sect.Thm}. In Section~\ref{Sect.diam} we derive Corollary~\ref{the.theorem.cor} and then conclude in Section~\ref{Sect.general} by using
Theorem~\ref{the.theorem} to derive  two other intersection probabilities for a chordal SLE path and a semicircle centred on the real line.
(These are given by Theorem~\ref{the.theorem.general} and Corollary~\ref{the.corollary.general}.) In particular, for $4 < \kappa <8$, we estimate the probability that an entire semicircle on the real line is swallowed at once by a  chordal SLE$_\kappa$ path in $\H$ from $0$ to $\infty$.

\section{Notation}\label{Sect.notation}

We now introduce the notation that will be used throughout the remainder of the paper.  Let $\C$ denote the set of complex numbers, and write $\H =\{z \in \C: \Im(z)>0\}$ to denote the upper half plane.
If $\epsilon>0$ and $ z\in \C$,
we write
$\Ball(z; \epsilon) = \{ w \in \C : |z-w| < \epsilon\}$
for the ball of radius $\epsilon$ centred at $z$.
If $x \in \R$, then the half disk and semicircle of radius $\epsilon$ centred at $x$ in the upper half plane are given by
\begin{equation}\label{defnDr.feb22}
\Halfdisk(x; \epsilon) =\Ball(x; \epsilon)  \cap \H =\{x +\rho  e^{i\theta}: 0 < \theta < \pi, 0< \rho < \epsilon\}
\end{equation}
and
\begin{equation}\label{defnCr.feb22}
\Halfcircle(x; \epsilon) = \bd \Ball(x; \epsilon) \cap \H = \{x + \epsilon e^{i\theta}: 0 < \theta < \pi\},
\end{equation}
respectively.

The chordal Schramm-Loewner evolution in $\H$ from 0 to $\infty$ with parameter $\kappa = 2/a$ is the solution of the differential equation
\begin{equation}\label{SLEeqn}
\bd_t g_t(z) = \frac{a}{g_t(z)-U_t}, \;\;\; g_0(z)=z,
\end{equation}
where $z \in \H$ and $U_t=-B_t$ is a standard one-dimensional Brownian motion with $B_0=0$.
It is a hard theorem to prove that there exists a curve $\gamma:[0,\infty) \to \overline{\H}$ with $\gamma(0)=0$ which generates the maps $\{g_t, \; t \ge 0\}$.
More precisely, for $z \in\H$,  let $T_z$ denote the first time of explosion of the chordal Loewner equation~(\ref{SLEeqn}), and define the hull $K_t$ by
$K_t  = \overline{\{z \in \H : T_z <t\}}$.
The hulls $\{K_t, \;t\ge 0\}$ are an increasing family of compact sets in $\overline{\H}$ and $g_t$ is a conformal transformation of $\H \setminus K_t$ onto $\H$.  For all $\kappa>0$, there is a continuous curve $\{\gamma(t), \; t \ge 0\}$ with $\gamma:[0,\infty) \to \overline{\H}$ and  $\gamma(0)=0$ such that $\H \setminus K_t$ is the unbounded connected component of $\H \setminus \gamma(0,t]$ (wp1).  The behaviour of the curve $\gamma$ depends on the parameter $\kappa$ (or, equivalently, the value of $a$).  If $a\ge 1/2$ (i.e., $0<\kappa \le 4$), then $\gamma$ is a simple curve with $\gamma(0,\infty) \subset \H$ and $K_t = \gamma(0,t]$. If $1/4 < a< 1/2$ (i.e., $4<\kappa <8$), then $\gamma$ is a non-self-crossing curve with self-intersections and $\gamma(0,\infty) \cap \R \neq \emptyset$; that is, $K_t \neq \gamma(0,t]$.  Although the present work will not be concerned with the case $a \le 1/4$ (i.e., $\kappa \ge 8$), it is worth recalling that for this regime $\gamma$ is a space-filling, non-self-crossing curve.
Let $\mu^{\#}_{\H}(0,\infty)$ denote the chordal SLE$_{\kappa}$ probability measure on paths in $\H$ from 0 to $\infty$. If $D \subset \C$ is a simply connected domain and $z$, $w$ are distinct points in $\bd D$, then $\mu^{\#}_{D}(z,w)$, the chordal SLE$_{\kappa}$ probability measure on paths in $D$ from $z$ to $w$, is defined to the image of $\mu^{\#}_{\H}(0,\infty)$ under a conformal transformation $f:\H \to D$ with $f(0)=z$ and $f(\infty)=w$. In other words, SLE$_{\kappa}$ in $D$ from $z$ to $w$ is simply the conformal image of
SLE$_{\kappa}$ in $\H$ from $0$ to $\infty$. For further details about SLE, consult~\cite{SLEbook}.

\section{Proof of Theorem~\ref{the.theorem}}\label{Sect.Thm}

In this section we prove Theorem~\ref{the.theorem}. The proof is divided into four subsections; the first three subsections are needed to establish~(a) while the fourth subsection is needed to establish~(b). For the lower bound in both the $0<\kappa \le 4$ and $4 <\kappa <8$ cases, we are able to give an explicit value for the constant. For the upper bound, however, all that can be determined is the existence of a constant. 

\subsection{The upper bound}\label{Sect.upper}

Throughout this section, suppose that $\gamma:[0,\infty) \to \overline \H$ is a chordal  SLE$_\kappa$ in $\H$ from $0$ to $\infty$ with $0<\kappa<8$ and  $a=2/\kappa$.  The primary tool we need to establish the upper bound  in Theorem~\ref{the.theorem}~(a) is originally due to Beffara~\cite[Proposition~2]{beffara}. We briefly recall the statement here and refer the reader to~\cite{beffara} for further details.

\begin{proposition}\label{Vincent.Prop}
If $z \in \H$, $0<\epsilon \le \Im\{z\}/2$, and $\Ball(z;\epsilon) = \{w \in \C: |z-w|<\epsilon\}$ denotes the ball of radius $\epsilon$ centred at $z$, then
$$P\{ \gamma[0,\infty) \cap \Ball(z;\epsilon) \neq \emptyset\} \asymp \left(\frac{\epsilon}{\Im\{z\}}\right)^{1-\frac{1}{4a}} \left(\frac{\Im\{z\}}{|z|}\right)^{4a-1}$$
where the constants implied by $\asymp$ may depend on $a$.
\end{proposition}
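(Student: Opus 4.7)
The approach is via the SLE one-point function for the target point $z$. Write $g_t$ for the chordal Loewner flow driven by $U_t = -B_t$, set $Z_t = g_t(z) - U_t$, $\Upsilon_t = \Im g_t(z)$, $r_t = |Z_t|$, $\theta_t = \arg Z_t \in (0, \pi)$, and let $d = 1 + \kappa/8 = 1 + 1/(4a)$ (the conjectural Hausdorff dimension of SLE$_\kappa$). The first step is to identify the local martingale
$$M_t := |g_t'(z)|^{2-d}\, G\!\bigl(g_t(z) - U_t\bigr), \qquad G(w) := \frac{(\Im w)^{d+4a-3}}{|w|^{4a-1}},$$
so that $M_0 = G(z) = (\Im z)^{d-2}(\Im z/|z|)^{4a-1}$. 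A direct Ito computation using $\partial_t g_t'(z) = -a g_t'(z)/Z_t^2$, $dZ_t = (a/Z_t)\,dt + dB_t$, and $d\Upsilon_t = -a\Upsilon_t/r_t^2\,dt$ shows that $M_t$ is a nonnegative local martingale on $[0, T_z)$. The exponents are forced; cancellation of the drift reduces to the single identity $4a(d+4a-3) = (4a-1)^2$, which is automatic from $d = 1+1/(4a)$.

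For the upper bound, let $\tau_\epsilon = \inf\{t : \gamma(t) \in \bd\Ball(z,\epsilon)\}$ and apply optional stopping to $M$ at $\tau_\epsilon \wedge T_z$. On the event $\{\tau_\epsilon < T_z\}$, the Koebe $1/4$-theorem applied to $g_{\tau_\epsilon}^{-1}:\H \to H_{\tau_\epsilon}$ at the point $g_{\tau_\epsilon}(z)$ gives
$$\Upsilon_{\tau_\epsilon} \;\asymp\; \epsilon\, |g_{\tau_\epsilon}'(z)| \;\asymp\; r_{\tau_\epsilon},$$
so the angular factor $\sin\theta_{\tau_\epsilon}$ is uniformly bounded above and below; the prefactor $|g_t'(z)|^{2-d}$ was chosen exactly so the derivative dependence cancels, giving $M_{\tau_\epsilon} \asymp \epsilon^{d-2}$. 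Nonnegativity of $M$ and optional stopping then yield
$$G(z) \;=\; M_0 \;\ge\; E\!\left[M_{\tau_\epsilon};\, \tau_\epsilon < \infty\right] \;\gtrsim\; \epsilon^{d-2}\, P\{\tau_\epsilon < \infty\},$$
and rearranging using $d-2 = -(1-1/(4a))$ produces the upper half of the claimed asymptotic. The hypothesis $\epsilon \le \Im z/2$ is used to guarantee $\Ball(z,\epsilon) \subset \H$ and to make the Koebe distortion comparisons effective.

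The lower bound is the genuinely hard part, because on $\{\tau_\epsilon < \infty\}$ the ratio $\sin\theta_{\tau_\epsilon}$ could a priori degenerate to $0$ (the path creeping up tangentially to $z$), in which case $M_{\tau_\epsilon}$ is much smaller than $\epsilon^{d-2}$. The standard remedy, which I would follow, is a separation / quasi-independence argument on dyadic shells $\Ball(z,2^{-k}) \setminus \Ball(z, 2^{-k-1})$. One shows that, uniformly over the SLE state when $\gamma$ first enters $\Ball(z, 2^{-k})$, there is a probability bounded away from $0$ that the curve proceeds to exit $\Ball(z, 2^{-k-1})$ in a \emph{good} configuration in which $\sin\theta_t$ is of order $1$. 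Chaining such events from scale $\Im z$ down to scale $\epsilon$ via the Markov property of SLE, together with a Beurling-type estimate for harmonic measure of the complement of the hull to exclude the degenerate angular regime, delivers the matching lower bound $P \gtrsim \epsilon^{2-d} G(z)$.

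The principal obstacle is precisely this separation step. The martingale identity and the upper bound are essentially a one-page Ito exercise once the correct exponents are in hand, but quantifying "good approach configuration" uniformly over the history requires either the two-point estimate from Beffara's paper coupled to a chaining argument, or, equivalently, a soft self-similarity statement: the conditional law of $\theta_t$ given that $\gamma$ reaches scale $\epsilon$ near $z$ must be shown to have a nondegenerate limit supported away from $\{0,\pi\}$.
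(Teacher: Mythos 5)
The paper does not prove this proposition at all: it is quoted from Beffara's \emph{The dimension of the SLE curves} (his Proposition~2) and used as a black box, so there is no internal argument to compare yours against. Judged on its own, your sketch starts correctly --- the local martingale $M_t=|g_t'(z)|^{2-d}\,\Upsilon_t^{d-2}(\sin\theta_t)^{4a-1}$ is the right object and your exponent identity checks out --- but the step you rely on for the upper bound is false, and you have located the difficulty in the wrong half of the proof.

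The gap is the claim that $\Upsilon_{\tau_\epsilon}\asymp \epsilon\,|g_{\tau_\epsilon}'(z)|\asymp r_{\tau_\epsilon}$, hence $\sin\theta_{\tau_\epsilon}\asymp 1$. Koebe does give the first comparison, since $z$ lies at distance exactly $\epsilon$ from $\bd H_{\tau_\epsilon}$ (and at distance $\ge 2\epsilon$ from $\R$). But it gives no upper bound on $r_{\tau_\epsilon}=|g_{\tau_\epsilon}(z)-U_{\tau_\epsilon}|$ in terms of $\epsilon\,|g_{\tau_\epsilon}'(z)|$: the tip $\gamma(\tau_\epsilon)$ sits on the boundary of the disc of univalence of $g_{\tau_\epsilon}$ about $z$, exactly where the distortion theorem degenerates. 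If the curve has nearly surrounded $z$ before first touching $\bd\Ball(z;\epsilon)$ --- possible for every $\kappa$ and typical behaviour for $\kappa>4$ --- then $\theta_{\tau_\epsilon}$ is close to $0$ or $\pi$ and $r_{\tau_\epsilon}\gg\Upsilon_{\tau_\epsilon}$. Since $4a-1>0$, on that part of the event $M_{\tau_\epsilon}=c\,\epsilon^{d-2}(\sin\theta_{\tau_\epsilon})^{4a-1}$ is far \emph{below} $\epsilon^{d-2}$, and the supermartingale inequality $M_0\ge E[M_{\tau_\epsilon};\tau_\epsilon<\infty]$ then says nothing about $P\{\tau_\epsilon<\infty\}$. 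So the angular degeneracy you defer to the lower bound is precisely the obstruction to the upper bound; it is not a one-page It\^o exercise.

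What actually closes both directions (and is what Beffara does) is: work with the conformal radius $\Upsilon_t/|g_t'(z)|$, which Koebe compares to $\mathrm{dist}(z,K_t\cup\R)$ with two-sided constants so that the Euclidean statement follows from the conformal-radius one; tilt $P$ by the local martingale $M$ via Girsanov; in the logarithmic time scale of the conformal radius the angle $\theta$ becomes, under the tilted measure, a positive recurrent diffusion on $(0,\pi)$. The lower bound is then the \emph{easy} direction, from $M_\sigma^{-1}\ge c\,\epsilon^{2-d}$ together with the fact that the tilted process reaches scale $\epsilon$ with probability bounded below; the upper bound is the direction that needs the ergodic input, namely a uniform bound on $E^*[(\sin\theta_\sigma)^{1-4a}]$. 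Your dyadic-shell separation scheme is the machinery for Beffara's \emph{two-point} estimate (the dimension lower bound) and is not needed for this one-point statement.
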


We would like to stress that this proposition holds for all $a>1/4$ (equivalently, all $0<\kappa<8$). The following theorem gives a careful statement of the upper bound that we will establish.

\begin{theorem}\label{the.theorem.up}
Let $0 < r \le 1/3$ and $x>0$. If  $\gamma:[0,\infty) \to \overline{\H}$ is a chordal SLE$_\kappa$ in $\H$ from $0$ to $\infty$ 
with $0<\kappa<8$ and $a=2/\kappa$,
then there exists a constant $c_a$ such that
$$P\{\gamma[0,\infty) \cap \Halfcircle(x; rx) \neq \emptyset\} \leq c_a   r^{4a-1}.$$
 \end{theorem}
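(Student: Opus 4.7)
The plan is to cover the semicircle $\Halfcircle(x;rx)$ by a countable family of small balls, apply Proposition~\ref{Vincent.Prop} to each, and sum the resulting estimates via a union bound. The wrinkle is that points on $\Halfcircle(x;rx)$ close to the real axis have small imaginary part, so the hypothesis $\epsilon \le \Im(z)/2$ in Proposition~\ref{Vincent.Prop} forces a dyadic decomposition in height.

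Concretely, parametrize the semicircle by $w(\theta) = x + rxe^{i\theta}$ and observe that because $0 < r \le 1/3$, every such point satisfies $|w(\theta)| \asymp x$, while $\Im(w(\theta)) = rx\sin\theta$. For each integer $n \ge 0$, let $A_n$ be the subarc on which $\sin\theta \in [2^{-n-1}, 2^{-n}]$; this is a piece of the semicircle of Euclidean length $\asymp rx \cdot 2^{-n}$ whose points sit at height $\asymp rx \cdot 2^{-n}$ above the real axis. (There are two such subarcs, one near $\theta = 0$ and one near $\theta = \pi$; by symmetry it suffices to bound the hitting probability of one and then double.) Cover $A_n$ by a uniformly bounded number $N$ of balls $\Ball(z_{n,j}, \epsilon_n)$, where $\epsilon_n = c\, rx \cdot 2^{-n}$ and the constant $c$ is chosen small enough that $\epsilon_n \le \Im(z_{n,j})/2$ for each $j$ while still ensuring that the $N$ balls cover $A_n$.

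With these parameters, Proposition~\ref{Vincent.Prop} yields, for each $j$,
$$P\{\gamma[0,\infty) \cap \Ball(z_{n,j}, \epsilon_n) \neq \emptyset\} \le C_a \left(\frac{\epsilon_n}{\Im(z_{n,j})}\right)^{1-1/(4a)} \left(\frac{\Im(z_{n,j})}{|z_{n,j}|}\right)^{4a-1} \le C'_a\, (r \cdot 2^{-n})^{4a-1},$$
since the first factor is bounded by an $n$-independent constant and the second is $\asymp (r \cdot 2^{-n})^{4a-1}$. A union bound over the $N$ balls at each level and then over $n \ge 0$ produces the geometric series
$$P\{\gamma[0,\infty) \cap \Halfcircle(x;rx) \neq \emptyset\} \le 2N C'_a\, r^{4a-1} \sum_{n \ge 0} 2^{-n(4a-1)}.$$
The hypothesis $\kappa < 8$ is exactly $4a - 1 > 0$, so the series converges and the desired bound $c_a r^{4a-1}$ follows.

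The only real obstacle is the bookkeeping in Step 3: one must verify that the dyadic radii $\epsilon_n$ can be taken small enough to legitimately invoke Proposition~\ref{Vincent.Prop} yet large enough that an $n$-independent number of balls still covers each $A_n$. This is a purely geometric exercise depending on one small numerical constant. It is worth noting that $\kappa = 8$ is precisely the threshold at which the geometric sum over $n$ diverges, so the argument cannot be pushed beyond the stated range.
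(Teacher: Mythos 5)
Your proof is correct and follows essentially the same strategy as the paper: cover $\Halfcircle(x;rx)$ by balls whose radii are comparable to (half of) their dyadically decreasing heights, apply Proposition~\ref{Vincent.Prop} to each ball using $|z|\asymp x$, and sum the resulting geometric series, which converges precisely because $4a-1>0$. The only difference is cosmetic --- the paper centres its balls at the apex $x+irx$ and at dyadic heights directly above the endpoints $x\pm rx$, whereas you slice the arc itself by height --- so the two arguments are the same in substance.
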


Our general strategy for~Theorem~\ref{the.theorem.up}
will be to cover the semicircle $\Halfcircle(x; rx)$ with a sequence of balls and then apply Proposition~\ref{Vincent.Prop} to each ball. This is illustrated in Figure~\ref{figure3.new}.
\begin{figure}[h]
\begin{center}
\includegraphics[height=2.7in]{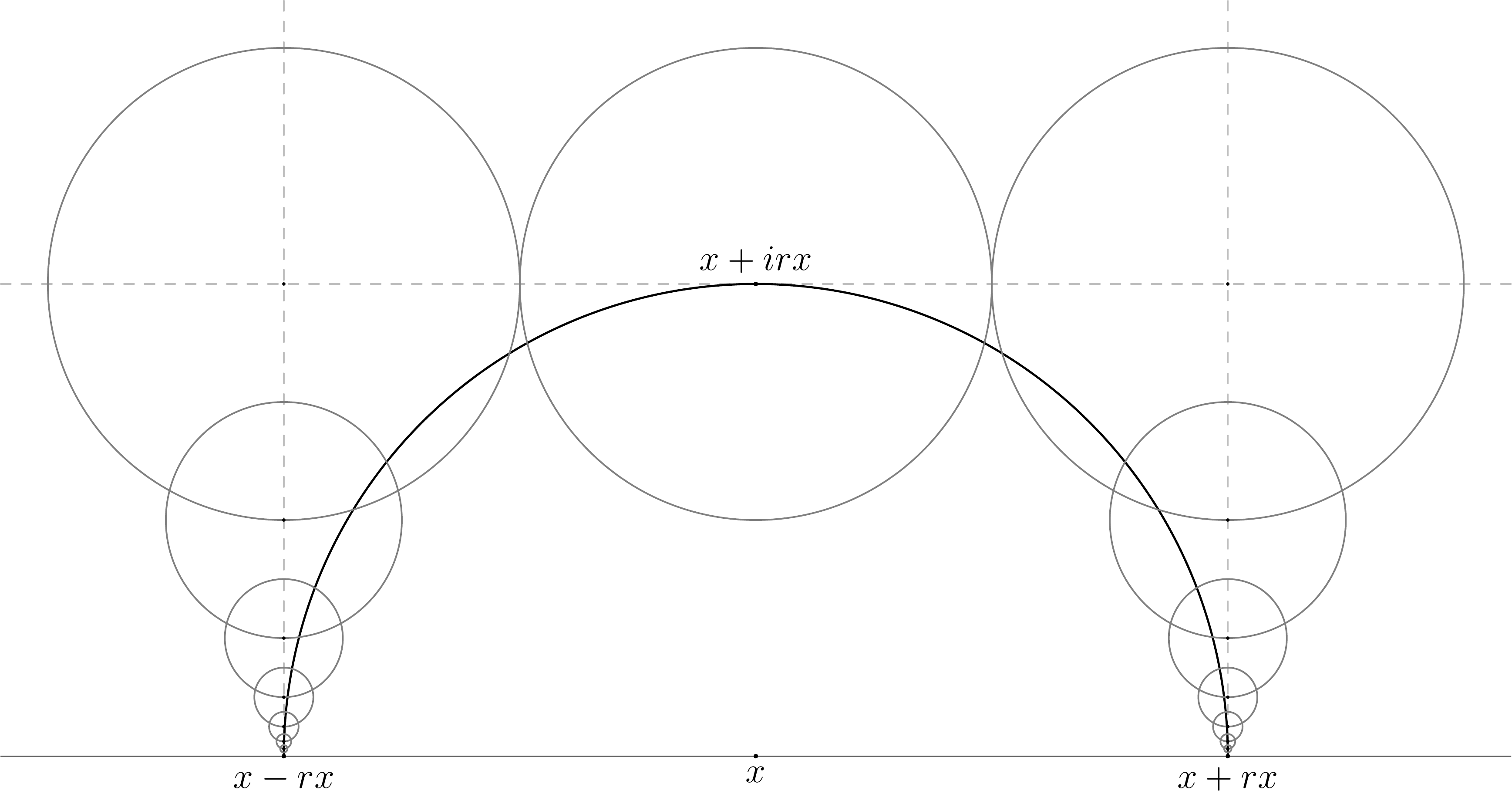}
\caption{The semicircle $\Halfcircle(x; rx)$ covered by a sequence of balls
centred at $\{z_{\pm n}, n=0,1,\ldots\}$. }\label{figure3.new}
\end{center}
\end{figure}

\vspace{-.3in}

\begin{proof}[Proof of Theorem~\ref{the.theorem.up}]
Set $z_0 = x + irx$ and for $n=1,2,\ldots$, let
$z_{ \pm n} =x \pm rx + irx2^{-|n|+1}$. Using Proposition~\ref{Vincent.Prop}, it follows that
$$P\left\{\gamma[0,\infty) \cap\Ball\left(z_{\pm n};\frac{\Im\{z_{\pm n}\}}{2}\right) \neq \emptyset\right\} \asymp
2^{\frac{1}{4a}-1}\left(\frac{\Im\{z_{\pm n}\}} {|z_{\pm n}|}\right)^{4a-1}
 \asymp \frac{r^{4a-1}}{2^{(4a-1)|n|}} $$
since $|z_{\pm n}| \asymp x$ for $0< r \leq 1/3$.
Hence,
\begin{equation}\label{An}
\sum_{n=-\infty}^{\infty} P\left\{\gamma[0,\infty) \cap\Ball\left(z_n;\frac{\Im\{z_n\}}{2}\right) \neq \emptyset\right\} \asymp
r^{4a-1}.
\end{equation}
But if $\gamma[0, \infty)$ intersects $\Halfcircle(x; rx)$, then it also must intersect (at least) one of $\Ball \left( z_{\pm n}; \frac{\Im\{z_{\pm n}\}}{2} \right)$, as is clear from Figure~\ref{figure3.new}. Hence,~(\ref{An}) implies that  there exists a constant $c_a$ such that
$$
P \left \{ \gamma[0, \infty) \cap \Halfcircle(x; rx) \neq \emptyset \right \} \leq \sum_{n=-\infty}^{\infty} P\left\{\gamma[0,\infty) \cap\Ball\left(z_n;\frac{\Im\{z_n\}}{2}\right) \neq \emptyset\right\} \le  c_a r^{4a-1}
$$
and the proof is complete.
\end{proof}

\subsection{The lower bound for $4<\kappa<8$}\label{Sect.lower.48}

In this section we establish the lower bound in Theorem~\ref{the.theorem}~(a) for $\kappa \in (4,8)$.

\begin{theorem}\label{the.theorem.low}
Let $0 < r \le 1/3$ and $x>0$.  If $\gamma:[0,\infty) \to \overline{\H}$ is a chordal SLE$_\kappa$ in $\H$ from $0$ to $\infty$
with $4<\kappa<8$ and $a=2/\kappa$, then there exists a constant $c_a$ such that
$$P\{\gamma[0,\infty) \cap \Halfcircle(x; rx) \neq \emptyset\} \ge c_a   r^{4a-1}.$$
 \end{theorem}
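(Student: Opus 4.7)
The plan is to bound the semicircle-hitting probability from below by the probability that $\gamma$ hits the corresponding real segment $I = [x-rx, x+rx]$, and then to compute the latter in closed form via the Loewner equation; the closed-form calculation produces the explicit constant.

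For the topological reduction, since $0 < r \le 1/3$ gives $x - rx > 0$, we have $\gamma(0) = 0 \notin \overline{\Halfdisk(x; rx)}$. For $\kappa > 4$ the SLE trace hits any given boundary point with probability zero (this follows from Proposition~\ref{Vincent.Prop} applied near the real line), so with full probability the first hit of $I$ is at a point $\gamma(\tau) \in (x-rx, x+rx)$. For small enough $\delta > 0$, $\Ball(\gamma(\tau), \delta) \cap \overline{\H} \subset \Halfdisk(x; rx) \cup (x-rx, x+rx)$, so continuity forces $\gamma(t) \in \Halfdisk(x; rx)$ for $t$ slightly below $\tau$. Setting $t^* = \inf\{t \ge 0 : \gamma(t) \in \Halfdisk(x; rx)\} \in [0, \tau)$, continuity places $\gamma(t^*)$ on the topological boundary $\Halfcircle(x; rx) \cup I$; since $t^* < \tau$ we have $\gamma(t^*) \notin I$, leaving $\gamma(t^*) \in \Halfcircle(x; rx)$. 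Hence $P\{\gamma \cap \Halfcircle(x; rx) \neq \emptyset\} \ge P\{\gamma \cap I \neq \emptyset\}$.

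Next, set $y_1 = x-rx$, $y_2 = x+rx$, $V_t = g_t(y_1) - U_t$, $W_t = g_t(y_2) - U_t$. Each is a Bessel process of dimension $1 + 2a < 2$ (since $\kappa > 4$), hitting $0$ at swallowing times $T_{y_1} \le T_{y_2}$. A geometric analysis of the Loewner configuration at $T_{y_1}$ shows $\{\gamma \cap I \neq \emptyset\} = \{T_{y_1} < T_{y_2}\}$ up to null events (the touch point $p$ at time $T_{y_1}$ satisfies $p \in [y_1, y_2)$ iff $T_{y_1} < T_{y_2}$, with the boundary case $p = y_2$ contributing zero probability). Applying It\^o to $u_t = V_t/W_t$ and time-changing by $d\tau = dt/W_t^2$ produces a one-dimensional diffusion on $(0,1)$ with generator $L = \tfrac{1}{2}(1-u)^2 \partial_u^2 + (a/u + (1-a) u - 1)\partial_u$. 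Thus $P\{\gamma \cap I \neq \emptyset\} = \phi(u_0)$ with $u_0 = (1-r)/(1+r)$, where $\phi$ solves $L\phi = 0$ on $(0,1)$ with $\phi(0) = 1$, $\phi(1) = 0$.

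The ODE separates: using the partial-fraction identity $((1-a)u - a)/(u(1-u)) = -a/u + (1-2a)/(1-u)$, we find $\phi'(u) \propto u^{-2a}(1-u)^{4a-2}$, which is integrable on $(0,1)$ precisely when $1/4 < a < 1/2$ (matching $4 < \kappa < 8$), yielding
\[
\phi(u) = \frac{1}{B(1-2a,\, 4a-1)} \int_u^1 s^{-2a}(1-s)^{4a-2}\, ds.
\]
Bounding $s^{-2a} \ge 1$ on $[u_0, 1]$ and using $1 - u_0 = 2r/(1+r) \ge r$ for $0 < r \le 1$ give $\phi(u_0) \ge r^{4a-1}/\bigl((4a-1)\, B(1-2a,\, 4a-1)\bigr)$, producing the explicit constant $c_a = 1/\bigl((4a-1)\, B(1-2a,\, 4a-1)\bigr)$. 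The hard part will be the geometric identification $\{\gamma \cap I \neq \emptyset\} = \{T_{y_1} < T_{y_2}\}$ (modulo nulls), which requires a careful case analysis of the Loewner touch point at the first swallowing time; once that is established the It\^o/ODE calculation is routine, but care is needed at the singular endpoints $u=0$ and $u=1$ to see that the integrability condition $4<\kappa<8$ is the right one.
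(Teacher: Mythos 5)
Your proposal is correct and follows essentially the same route as the paper: bound the semicircle-hitting probability below by the probability of hitting the interval $[x-rx,x+rx]$, and then use the exact formula $\frac{\Gamma(2a)}{\Gamma(1-2a)\Gamma(4a-1)}\int_0^{2r/(1+r)} t^{4a-2}(1-t)^{-2a}\,dt$ for the latter (your $\phi(u_0)$ becomes exactly this after the substitution $t=1-s$). The only difference is that the paper simply cites this formula from Proposition~6.34 of~\cite{SLEbook} while you re-derive it via the Bessel/It\^o/ODE argument, and you make explicit the topological reduction and the null-set identification $\{\gamma\cap I\neq\emptyset\}=\{T_{y_1}<T_{y_2}\}$ that the paper dispatches with a figure.
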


\begin{proof}
It is clear that if $\gamma[0, \infty)$ intersects the interval $[x-rx, x+rx]$ then it also intersects the semicircle $\Halfcircle(x; rx)$, as Figure \ref{figure4.new} shows.
\begin{center}
\begin{figure}[h]
\begin{center}
\includegraphics{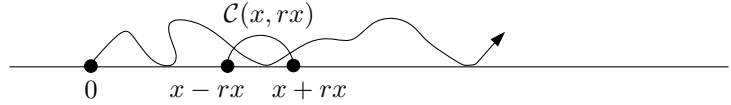}
\caption{The event that $\gamma[0, \infty)$ intersects the interval $[x-rx, x+rx]$.}\label{figure4.new}
\end{center}
\end{figure}
\end{center}
By Proposition~6.34 of \cite{SLEbook} and the scale invariance of SLE,
\begin{align*}
P \left \{ \gamma[0, \infty) \cap [x-rx, x+rx] \neq \emptyset \right \} &= \frac{\Gamma(2a)}{\Gamma(1-2a) \Gamma(4a-1)} \int_0^{\frac{2r}{1+r}} \frac{dt}{t^{2-4a}(1-t)^{2a}} \\
&\geq \frac{\Gamma(2a)}{\Gamma(1-2a) \Gamma(4a-1)} \int_0^{\frac{2r}{1+r}} \frac{dt}{t^{2-4a}(1/2)^{2a}} \\
&\geq \frac{\Gamma(2a)2^{2a}}{\Gamma(1-4a) \Gamma(4a)} (2r)^{4a-1}.
\end{align*}
The first and second inequalities use $0 < r \leq 1/3$.
\end{proof}

\subsection{The lower bound for $0<\kappa \le 4$}\label{Sect.lower.04}

In this section we establish the lower bound in Theorem~\ref{the.theorem}~(a) for $\kappa \in (0,4)$.

\begin{theorem}\label{the.theorem.low.low}
Let $0<r<1$ and $x>0$. If $\gamma:[0,\infty) \to \overline{\H}$ is a chordal SLE$_\kappa$ in $\H$ from $0$ to $\infty$
with $0<\kappa\le 4$ and $a=2/\kappa$, then there exists a constant $c_a$ such that
$$P\{\gamma[0,\infty) \cap \Halfcircle(x; rx) \neq \emptyset\} \ge c_a   r^{4a-1}.$$
\end{theorem}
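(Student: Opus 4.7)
My plan is to prove this lower bound via Beffara's ball estimate (Proposition~\ref{Vincent.Prop}) rather than via a real-line hitting calculation. The direct approach of Theorem~\ref{the.theorem.low} is unavailable here: for $0<\kappa \le 4$ the SLE path satisfies $\gamma(0,\infty)\subset \H$, so $\gamma$ never touches the interval $[x-rx,x+rx]$ at all, and the probability that it does is $0$. Instead, the idea is to identify a ball sitting strictly inside the half-disk $\Halfdisk(x;rx)$ at the correct scale, and then observe that any chord from $0$ to $\infty$ which enters this ball is forced, by its avoidance of $\R$, to cross the boundary semicircle $\Halfcircle(x;rx)$.

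The concrete choice I would make is $z_0 = x + \tfrac{i}{2}rx$ and $\epsilon = \tfrac{1}{4}rx$. This has three useful properties: (i) $\epsilon = \Im(z_0)/2$, which satisfies the hypothesis of Proposition~\ref{Vincent.Prop} with equality; (ii) by the triangle inequality, $|w-x| \le \epsilon + \tfrac{1}{2}rx = \tfrac{3}{4}rx < rx$ for every $w\in \Ball(z_0;\epsilon)$, so $\Ball(z_0;\epsilon) \subset \Halfdisk(x;rx)$; and (iii) $|z_0|^2 = x^2(1+r^2/4)$, so $\Im(z_0)/|z_0| = r/(2\sqrt{1+r^2/4})$ is comparable to $r$ uniformly in $0<r<1$. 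Feeding (i) and (iii) into Proposition~\ref{Vincent.Prop} yields a constant $c_a'$, depending only on $a$, with
$$
P\{\gamma[0,\infty) \cap \Ball(z_0;\epsilon) \neq \emptyset\} \ge c_a'\, r^{4a-1}.
$$

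To finish, I would show that the event on the left is contained in $\{\gamma[0,\infty)\cap \Halfcircle(x;rx)\neq \emptyset\}$. Since $0<r<1$, both $0$ and $\infty$ lie outside the closed half-disk $\overline{\Halfdisk(x;rx)}$, whose relative boundary in $\overline{\H}$ decomposes as $\Halfcircle(x;rx) \cup [x-rx,x+rx]$. If $\gamma(t_0)\in \Ball(z_0;\epsilon)\subset \Halfdisk(x;rx)$ for some $t_0>0$, continuity of $\gamma$ forces it to cross this boundary at some earlier time; the hypothesis $\kappa \le 4$ guarantees $\gamma(0,\infty)\subset \H$, which rules out a crossing of $[x-rx,x+rx]$, so the crossing must occur on $\Halfcircle(x;rx)$. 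The only step requiring real care — and the one I would expect to be the main obstacle — is the selection of $z_0$ and $\epsilon$: the ball must sit strictly inside $\Halfdisk(x;rx)$ for the topological argument to go through, while simultaneously obeying $\epsilon \le \Im(z_0)/2$ and having $\Im(z_0)/|z_0|$ of order $r$ so that Proposition~\ref{Vincent.Prop} delivers the sharp $r^{4a-1}$ rate. The choice above is tailored to satisfy all three constraints at once, after which the proof is essentially immediate.
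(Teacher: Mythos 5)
Your argument is correct, but it takes a genuinely different route from the paper. The paper's proof observes that the event $\{\gamma[0,\infty) \cap \Halfcircle(x;rx) \neq \emptyset\}$ contains the event that the single point $x+irx$ lies to the left of $\gamma[0,\infty)$, and then bounds that left-passage probability below using Schramm's formula (Proposition~\ref{Prop_passleft}), i.e.\ an explicit computation with $\int_0^{\arctan(r)}(\sin\alpha)^{4a-2}\,d\alpha$. You instead plant the ball $\Ball(x+\tfrac{i}{2}rx;\tfrac{1}{4}rx)$ inside $\Halfdisk(x;rx)$, invoke the lower half of Beffara's two-sided estimate (Proposition~\ref{Vincent.Prop}), and use the topological fact that a curve from $0$ with $\gamma(0,\infty)\subset\H$ can only enter the half-disk through $\Halfcircle(x;rx)$. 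All the steps check out: $\epsilon=\Im(z_0)/2$ meets Beffara's hypothesis, $(\Im z_0/|z_0|)^{4a-1}\asymp r^{4a-1}$ uniformly for $0<r<1$, the first-entry time into the open set $\Halfdisk(x;rx)$ lands on its boundary, and for $\kappa\le 4$ that boundary point cannot lie on $[x-rx,x+rx]$ almost surely; you are also right that the interval-hitting argument of Theorem~\ref{the.theorem.low} is vacuous in this regime. The trade-offs: the paper's route is more elementary (Schramm's formula is a closed-form one-point computation) and, as the authors emphasize at the start of Section~\ref{Sect.Thm}, it produces an explicit constant $c_a$, whereas your constant is inherited from the unspecified constants in Beffara's $\asymp$ and Beffara's estimate is a considerably heavier input. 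On the other hand, your derivation makes the mechanism behind the exponent $4a-1$ transparent --- it is exactly the boundary-proximity exponent $(\Im z/|z|)^{4a-1}$ in the one-point function --- and it dovetails naturally with the upper bound of Theorem~\ref{the.theorem.up}, which uses the other half of the same proposition.
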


To prove the theorem we recall the probability that a fixed point $z \in \H$ lies to the left of $\gamma[0,\infty)$. The version that we include may be found in Garban and Trujillo Ferreras~\cite{GT} and is equivalent to the one given by Schramm~\cite{Schramm}.

\begin{proposition}\label{Prop_passleft}
Let $z = \rho e^{i\theta} \in \H$, and set $f(z) = P\{\text{$z$ is to the left of $\gamma[0,\infty)$}\}$.  By scaling, the function $f$ only depends on  $\theta$ and is given by
$$f(\theta) = \frac{\int_{0}^{\theta} (\sin \alpha)^{4a-2} \; d\alpha }{\int_{0}^{\pi} (\sin \alpha)^{4a-2} \; d\alpha}.$$
\end{proposition}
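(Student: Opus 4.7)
The plan is to follow Schramm's martingale method. By the scale invariance of chordal SLE$_\kappa$, the event that $z = \rho e^{i\theta}$ lies to the left of $\gamma[0,\infty)$ depends on $z$ only through $\theta$, justifying the notation $f(z) = f(\theta)$.

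I would then track the shifted Loewner flow at the marked point, namely $Z_t := g_t(z) - U_t$. From~(\ref{SLEeqn}) and $U_t = -B_t$,
\[
dZ_t = \frac{a}{Z_t}\,dt + dB_t,
\]
and $Z_t$ remains in $\H$ until the swallowing time $T_z$. Writing $Z_t = R_t e^{i\Theta_t}$ and applying Itô's formula to the holomorphic function $\log Z_t$, the imaginary part gives
\[
d\Theta_t = -\frac{(2a-1)\sin\Theta_t\cos\Theta_t}{R_t^2}\,dt - \frac{\sin\Theta_t}{R_t}\,dB_t,\qquad d\langle\Theta\rangle_t = \frac{\sin^2\Theta_t}{R_t^2}\,dt.
\]

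By the domain Markov property together with conformal invariance, the conditional probability that $z$ lies to the left of $\gamma[0,\infty)$ given $\mathcal{F}_t$ equals $f(\Theta_t)$: the map $g_t - U_t$ sends $\H\setminus K_t$ conformally onto $\H$ with $\gamma[t,\infty)$ pushed forward to a fresh SLE$_\kappa$ from $0$ to $\infty$, and ``left of the path'' is preserved by the flow. Thus $M_t := f(\Theta_t)$ is a bounded martingale for $t < T_z$. Applying Itô's formula and forcing the drift of $M_t$ to vanish, then canceling the common factor $\sin\Theta_t/R_t^2$, yields the ODE
\[
\tfrac{1}{2}\sin\theta\, f''(\theta) - (2a-1)\cos\theta\, f'(\theta) = 0.
\]
Separating variables gives $f'(\theta) = c\,(\sin\theta)^{4a-2}$, and integrating subject to the boundary conditions $f(0) = 0$ and $f(\pi) = 1$ produces the stated ratio.

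The main obstacle is justifying those boundary conditions, which amounts to identifying the terminal behavior of $\Theta_t$. One must show that almost surely $\Theta_t$ converges, as $t \uparrow T_z$, to $0$ on the event that $z$ lies to the right of $\gamma$ and to $\pi$ on the event that $z$ lies to the left. For $0 < \kappa \le 4$ one has $T_z = \infty$ and $R_t \to \infty$, so after a time change by $\int_0^t R_s^{-2}\,ds$ the angular process reduces to an autonomous diffusion on $(0,\pi)$ absorbed at the endpoints, from which the identification follows; for $4 < \kappa < 8$, the point is swallowed at $T_z$ into one of the two complementary components of $\H\setminus K_{T_z}$, and these correspond precisely to the two angular endpoints.
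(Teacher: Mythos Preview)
The paper does not prove this proposition; it is quoted as a known result, with the stated form attributed to Garban and Trujillo Ferreras~\cite{GT} and the original to Schramm~\cite{Schramm}. Your argument is the standard Schramm martingale derivation and is correct: the SDE for $Z_t$, the It\^o computation for $\Theta_t$, the resulting ODE, and its integration are all right, and your identification of the boundary values via the terminal behaviour of $\Theta_t$ is the usual way to close the argument. So there is nothing to compare against in the paper itself; what you have written is essentially the proof one finds in the cited references.
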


\begin{proof}[Proof of Theorem~\ref{the.theorem.low.low}]
Figure~\ref{figure5.new} clearly shows that
$$P\{\gamma[0,\infty) \cap \Halfcircle(x; rx) \neq \emptyset\}
\ge P\{\text{$x+irx$ is to the left of $\gamma[0,\infty)$}\}.$$
\begin{center}
\begin{figure}[h]
\begin{center}
\includegraphics[height=1.5in]{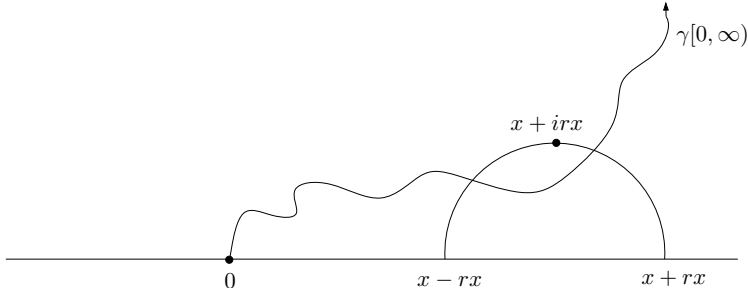}
\caption{The point $z= x+irx$ is to the left of $\gamma[0,\infty)$.}\label{figure5.new}
\end{center}
\end{figure}
\end{center}
Since $\arg(x+irx) = \arctan(r)$ and since
$2\sin t \ge t$
for $0\le t \le \pi/4$, we conclude from Proposition~\ref{Prop_passleft} that
\begin{align}\label{arctan}
P\{\text{$x+irx$ is to the left of $\gamma[0,\infty)$}\} \cdot \int_{0}^{\pi} (\sin \alpha)^{4a-2} \; d\alpha &=\int_{0}^{\arctan(r)} (\sin \alpha)^{4a-2} \; d\alpha\notag\\
&\ge \frac{1}{2}
\int_{0}^{\arctan(r)} \alpha^{4a-2} \; d\alpha\notag\\
&= \frac{\arctan^{4a-1}(r)}{8a-2}.
\end{align}
Since $8\arctan t \ge \pi t$ for $0\le t\le 1$, we see that~(\ref{arctan}) implies that there exists a constant $c_a$, namely
$$c_a = \frac{\pi^{4a-1}}{4^{6a-1}(4a-1) \int_{0}^{\pi} (\sin \alpha)^{4a-2} \; d\alpha},$$
such that
$P\{\text{$x+irx$ is to the left of $\gamma[0,\infty)$}\} \ge c_a r^{4a-1}$.
\end{proof}

 \subsection{The $\kappa=8/3$ case}\label{83section}

In this section we take  $\kappa =8/3$ and complete the proof of Theorem~\ref{the.theorem}~(b). The key fact that is needed is the restriction property of chordal SLE$_{8/3}$. Indeed, the  following remarkable formula due to Lawler, Schramm, and Werner solves the $\kappa=8/3$ case immediately.
See Theorem~6.17 of~\cite{SLEbook} for a proof; compare this with Proposition~9.4 and Example~9.7 of~\cite{SLEbook} as well.

\begin{proposition}\label{83exact}
If $\gamma:[0,\infty) \to \overline{\H}$ is a chordal SLE$_{8/3}$ in $\H$ from 0 to $\infty$, and $A$ is a bounded subset of $\H$ such that $\H \setminus A$ is simply connected, $A = \H \cap \overline{A}$, and $0 \not\in \overline{A}$,
then
$$P\{\gamma[0,\infty) \cap A = \emptyset \} = \left[\Phi'_A(0)\right]^{5/8}$$
where $\Phi_A: \H \setminus A \to \H$ is the unique conformal transformation of $\H\setminus A$ to $\H$ with $\Phi_A(0)=0$ and
$\Phi_A(z) \sim z$ as $z \to \infty$.
\end{proposition}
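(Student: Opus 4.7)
The plan is to prove the restriction formula by the standard SLE conformal-martingale argument. Let $g_t$ denote the SLE$_{8/3}$ flow solving~(\ref{SLEeqn}) with $a=3/4$, let $T_A$ be the first time the curve $\gamma$ meets $\overline{A}$, and for $t<T_A$ set $A_t=g_t(A)-U_t$ and $h_t=\Phi_{A_t}$, the normalized conformal map from $\H\setminus A_t$ to $\H$ guaranteed by the hypotheses of the proposition (after translating by $-U_t$, the tip of the Loewner hull sits at $0$, and since $\gamma$ has not yet hit $A$ we have $0\notin\overline{A_t}$). The candidate martingale is
\[
  M_t \;=\; \bigl[h_t'(0)\bigr]^{5/8},
\]
so that $M_0=[\Phi_A'(0)]^{5/8}$. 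Moreover, it is a standard fact from chordal SLE theory that $h_t'(0)\in(0,1]$, so $M_t$ takes values in $[0,1]$.

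First, I would derive evolution equations for $h_t'(0)$ and $h_t''(0)$. Uniqueness of the chordal conformal normalization for the map $\H\setminus(K_t\cup A)\to\H$ yields a commutation identity that, combined with Loewner's equation $\partial_t g_t=a/(g_t-U_t)$, produces a coupled deterministic system of ODEs for the low-order Taylor coefficients of $h_t$ at $0$; the randomness enters only through $U_t$. Applying Itô's formula to $M_t$ with $dU_t=-dB_t$ then produces a drift term that is an explicit rational function of $h_t'(0)$, $h_t''(0)$, the parameter $\kappa$, and the exponent $\alpha$ appearing in the power of $h_t'(0)$.

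The central algebraic fact at the heart of the restriction property is that this drift vanishes identically precisely when $(\kappa,\alpha)=(8/3,5/8)$; this is how those two special values are pinned down, and it is exactly the calculation that identifies chordal SLE$_{8/3}$ with the chordal restriction measure of exponent $5/8$. Consequently $M_t$ is a bounded local martingale, hence a true martingale.

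Finally, optional stopping at $T_A$ delivers the formula. On $\{T_A=\infty\}$ the hull $A_t$ is carried to $\infty$ by the Loewner flow as $t\to\infty$, so $h_t\to\mathrm{id}$ and $M_t\to 1$. On $\{T_A<\infty\}$ the set $A_t$ collapses onto $0$ as $t\uparrow T_A$, and a Koebe-type distortion estimate forces $h_t'(0)\to 0$, so $M_t\to 0$. Bounded convergence therefore gives
\[
  [\Phi_A'(0)]^{5/8} \;=\; M_0 \;=\; P\{T_A=\infty\} \;=\; P\{\gamma[0,\infty)\cap A=\emptyset\}.
\]
I expect the main obstacle to be the drift computation: carefully deriving the ODEs for $h_t'(0)$ and $h_t''(0)$, executing Itô, and verifying the cancellation at $(\kappa,\alpha)=(8/3,5/8)$ is the core technical step, while the two limiting statements underlying the optional-stopping step also require some care with distortion estimates for conformal maps near boundary points.
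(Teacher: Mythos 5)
Your outline is the standard Lawler--Schramm--Werner restriction-property argument: the bounded martingale $[h_t'(\cdot)]^{5/8}$ evaluated at the tip, the It\^o drift computation whose complete cancellation pins down $(\kappa,\alpha)=(8/3,5/8)$, and optional stopping with terminal values $1$ on $\{T_A=\infty\}$ and $0$ on $\{T_A<\infty\}$ --- and it is correct as a proof sketch. The paper gives no independent proof but defers to Theorem~6.17 of \cite{SLEbook}, which is exactly this argument, so your approach coincides with the paper's.
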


Applying Proposition~\ref{83exact} to our situation implies that
$$P\{\gamma[0,\infty) \cap \Halfcircle(x; rx) = \emptyset\} =P\{\gamma[0,\infty) \cap \Halfdisk(x; rx) = \emptyset\} = \left[\Phi_{\Halfdisk(x; rx)}'(0)\right]^{5/8}$$
where $\Halfdisk(x; rx)$ is the half disk of radius $rx$ centred at $x$ in the upper half plane as given by~(\ref{defnDr.feb22}) and $\Phi_{\Halfdisk(x; rx)}(z)$ is the conformal transformation from $\H \setminus \Halfdisk(x; rx)$ onto $\H$ with  $\Phi_{\Halfdisk(x; rx)}(0)=0$ and
$\Phi_{\Halfdisk(x; rx)}(z) \sim z$ as $z \to \infty$. In fact,  the exact form of $\Phi_{\Halfdisk(x; rx)}(z)$ is given by
$$\Phi_{\Halfdisk(x; rx)}(z)  = z+ \frac{r^{2}x^2}{z-x}+r^{2}x.$$
Note that $\Phi_{\Halfdisk(x; rx)}(0)=0$, $\Phi_{\Halfdisk(x; rx)}(\infty)=\infty$, and $\Phi'_{\Halfdisk(x; rx)}(\infty)=1$.
We calculate $\Phi_{\Halfdisk(x; rx)}'(0)=1-r^{2}$ and therefore conclude that
$$P\{\gamma[0,\infty) \cap \Halfcircle(x; rx) = \emptyset\} =(1-r^{2})^{5/8}$$
establishing Theorem~\ref{the.theorem}~(b).

\begin{remark}
It is worth noting that Proposition~\ref{83exact} with the exact form of the conformal transformation $\Phi_{\Halfdisk(x; rx)}: \H \setminus \Halfdisk(x; rx)\to \H$ was used by Kennedy~\cite{Kennedy03} to produce strong numerical evidence that the scaling limit of planar self-avoiding walk is chordal SLE$_{8/3}$.
\end{remark}

\section{Estimating the diameter of a chordal SLE path}\label{Sect.diam}

In this section, we derive Corollary~\ref{the.theorem.cor} from Theorem~\ref{the.theorem}.  The proof is not difficult; the basic idea is to determine the appropriate sequence of conformal transformations and use the conformal invariance of chordal SLE. Recall that if $D \subset \C$ is a simply connected domain and $z$, $w$ are two distinct points in $\bd D$, then chordal SLE$_\kappa$ in $D$ from $z$ to $w$ is defined to be the conformal image of chordal SLE$_\kappa$ in $\H$ from $0$ to $\infty$ as discussed in Section~\ref{Sect.notation}.

Let $x>0$ be real, and suppose that $\gamma':[0,1] \to \overline \H$ is an SLE$_\kappa$ in $\H$ from $0$ to $x$. We also note that we are not interested in the parametrization of the SLE path, but only in the set of points visited by its trace.
Suppose that $R\ge 3$, and consider
$\Halfcircle(0;Rx)  = \{Rxe^{i\theta}: 0 < \theta < \pi\}$. For $z \in \H$, let
$$h(z) = \frac{R^2}{R^2-1} \frac{z}{x-z}$$
so that $h:\H \to \H$ is a conformal (M\"obius) transformation with $h(0)=0$ and $h(x)=\infty$. It is straightforward (although a bit tedious) to verify that
$$h \left( \Halfcircle(0;Rx) \right) = \Halfcircle \left( -1; \frac{1}{R} \right).$$

If $\gamma:[0,\infty) \to \overline \H$ is a chordal SLE$_\kappa$ in $\H$ from $0$ to $\infty$, then the conformal invariance of SLE implies that
$$P\{\gamma'[0,1] \cap \Halfcircle(0;Rx)  \neq \emptyset\} =
P\{h(\gamma'[0,1]) \cap h(\Halfcircle(0;Rx))  \neq \emptyset\} =
 P\left\{\gamma[0,\infty) \cap \Halfcircle\left(-1, \frac{1}{R} \right) \neq \emptyset\right\}.$$
By the symmetry of SLE about the imaginary axis,
$$
P\left\{\gamma[0,\infty) \cap \Halfcircle\left(-1, \frac{1}{R} \right) \neq \emptyset\right\}=
P\left\{\gamma[0,\infty) \cap \Halfcircle\left(1, \frac{1}{R} \right) \neq \emptyset\right\} \asymp R^{1-4a},
$$
where the last bound follows from Theorem~\ref{the.theorem} with $r = 1/R$.

\section{An application of Theorem~\ref{the.theorem}}\label{Sect.general}

In this section, we derive estimates for two more intersection probabilities for a chordal SLE path and a semicircle centred on the real line.  In particular,
Corollary~\ref{the.corollary.general} gives an estimate 
in the  $4 < \kappa <8$ regime for the probability that an entire semicircle is swallowed at once by a  chordal SLE$_\kappa$ path in $\H$ from $0$ to $\infty$.

By the scaling properties of SLE, we may rewrite Theorem~\ref{the.theorem} 
in terms of a semicircle centred at $x>0$ of radius $\epsilon$, $0<\epsilon \le x/3$. In this form, it is seen to generalize a result due to Rohde and Schramm~\cite[Lemma~6.6]{RS}.

\begin{corollary}\label{the.corollary}
Let $x>0$ be a fixed real number, and suppose $0<\epsilon\le x/3$. If $\gamma:[0,\infty) \to \overline{\H}$ is a chordal SLE$_{\kappa}$ in $\H$ from $0$ to $\infty$ with $0<\kappa<8$ and $a=2/\kappa$, then
$$P\{\gamma[0,\infty) \cap \Halfcircle(x;\epsilon) \neq \emptyset \} \asymp \left(\frac{\epsilon}{x}\right)^{4a-1}$$
where $\Halfcircle(x;\epsilon)$ is the semicircle of radius $\epsilon$ centred at $x$ in the upper half plane as given by~(\ref{defnCr.feb22}).
\end{corollary}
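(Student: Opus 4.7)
The proof should be essentially immediate, since Theorem~\ref{the.theorem}~(a) is already stated for general $x>0$ with a semicircle of radius $rx$. The plan is to identify the parameter $r$ in Theorem~\ref{the.theorem} with the scale-invariant ratio $\epsilon/x$. Setting $r = \epsilon/x$, the radius constraint $0 < r \le 1/3$ becomes precisely $0 < \epsilon \le x/3$, the semicircle $\Halfcircle(x; rx)$ becomes $\Halfcircle(x; \epsilon)$, and the quantity $r^{4a-1}$ becomes $(\epsilon/x)^{4a-1}$.

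If one prefers to make the scaling structure explicit (as the introductory sentence to the corollary suggests), the same conclusion can be reached as follows. By scale invariance of chordal SLE$_\kappa$ from $0$ to $\infty$, the image of $\gamma[0,\infty)$ under the map $z \mapsto z/x$ is again (the trace of) a chordal SLE$_\kappa$ in $\H$ from $0$ to $\infty$. Since this map sends $\Halfcircle(x;\epsilon)$ to $\Halfcircle(1; \epsilon/x)$, we obtain
\begin{equation*}
P\{\gamma[0,\infty) \cap \Halfcircle(x;\epsilon) \neq \emptyset \} = P\{\gamma[0,\infty) \cap \Halfcircle(1; \epsilon/x) \neq \emptyset \}.
\end{equation*}
Now apply Theorem~\ref{the.theorem}~(a) with $x$ replaced by $1$ and $r = \epsilon/x \in (0, 1/3]$; the right-hand side is $\asymp (\epsilon/x)^{4a-1}$, completing the proof.

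Since the argument is a direct translation of Theorem~\ref{the.theorem}~(a), there is no genuine obstacle; the only point to verify carefully is that the constants implied by $\asymp$ depend only on $a$ (equivalently $\kappa$) and not on $x$ or $\epsilon$ separately, which is immediate from scale invariance. The remark comparing this to Lemma~6.6 of~\cite{RS} is commentary rather than something to be proved: that lemma is the analogous statement in a more restricted regime, and the present corollary extends it to all of $0 < \kappa < 8$.
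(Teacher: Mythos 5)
Your proposal is correct and matches the paper's approach: the paper gives no separate proof, simply noting that the corollary is Theorem~\ref{the.theorem}~(a) rewritten via the substitution $r=\epsilon/x$, which is exactly what you do. The explicit scaling argument you add is a harmless elaboration of the same idea.
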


We conclude with an application of Corollary~\ref{the.corollary} by combining it with a method due to Dub\'edat~\cite{triangles}. For the remainder of the paper, however, suppose that $4<\kappa<8$; as before, let  $a=2/\kappa$.
Suppose that $0<r \le 1/3$ and consider the two semicircles
\begin{equation}\label{circleR}
\Circle_r = \Halfcircle\left(1-r;\frac{r}{2}\right)
 = \left\{z \in \H : \left|z-1+r \right| = \frac{r}{2}\right\}
 \end{equation}
and
\begin{equation}\label{circleRprime}
\Circle'_r = \Halfcircle\left(1-\frac{3r}{4};\frac{3r}{4}\right)
 = \left\{z \in \H : \left|z-1+\frac{3r}{4} \right| = \frac{3r}{4}\right\}
 \end{equation}
as illustrated in Figure~\ref{figure6.balls}.
\begin{center}
\begin{figure}[h]
\begin{center}
\includegraphics[height=1.3in]{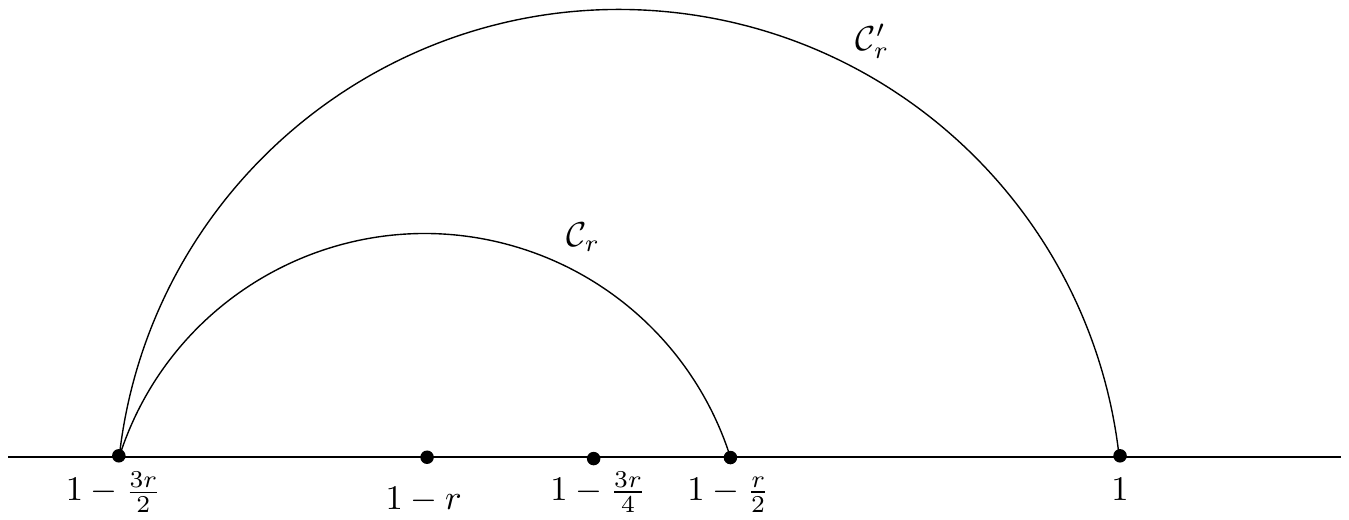}
\end{center}
\caption{The semicircles $\Circle'_r$ and $\Circle_r$.}\label{figure6.balls}
\end{figure}
\end{center}
It follows from Corollary~\ref{the.corollary} that
$P\{\gamma[0,\infty) \cap \Circle'_r \neq \emptyset\} \asymp r^{4a-1}$
and so there exists a constant $c'_a$ such that
$ 1-c'_ar^{4a-1} \le P\{\gamma[0,\infty) \cap \Circle'_r = \emptyset\}$.
However, it clearly follows that
$$P\{\gamma[0,\infty) \cap \Circle'_r = \emptyset\} \le \inf_{z \in \Circle_r} P\{T_z = T_1\}$$
where $T_z$ is the swallowing time of the point $z\in \overline \H$ (and the infimum is over all $z \in \Circle_r$ \emph{not} $z\in \Circle'_r$).
From this we conclude that there exists a constant $c'_a$ such that
\begin{equation}\label{jul18.eq1}
1- c'_a r^{4a-1} \le \inf_{z \in \Circle_r} P\{T_z = T_1\}.
\end{equation}
In order to derive an upper bound for the expression in~(\ref{jul18.eq1}),
we use a method from Dub\'edat~\cite{triangles}. We now outline this method referring the reader to that paper for further details.

Let $g_t$ denote the solution to the chordal Loewner equation~(\ref{SLEeqn}) with driving function $U_t = -B_t$ where $B_t$ is a standard one-dimensional Brownian motion with $B_0=0$. For $t < T_1$, the swallowing time of the point $1$, consider the conformal transformation $\tilde g_t:\H \setminus K_t \to \H$
given by
$$\tilde g_t(z)= \frac{g_t(z) +B_t}{g_t(1)+B_t}, \;\;\; \tilde g_0(z)=z.$$
Note that $\tilde g_t(\gamma(t)) = 0$, $\tilde g_t(1)=1$, $\tilde g_t(\infty)= \infty$, and that $\tilde g_t(z)$ satisfies the stochastic differential equation
$$
d \tilde g_t(z) =
\left[\frac{a}{\tilde g_t(z)} +(1-a)\tilde g_t(z) -1 \right] \frac{dt}{(g_t(1)+B_t)^2}
+ \left[1-\tilde g_t(z) \right] \frac{dB_t}{g_t(1)+B_t}.
$$
If we now perform the time-change
$$\sigma(t) = \int_0^t \frac{ds}{(g_s(1)+B_s)^2},$$
then $\tilde g_{\sigma(t)}(z)$
satisfies the  stochastic differential equation
\begin{equation}\label{gtildesde}
d \tilde g_t(z) =
\left[\frac{a}{\tilde g_t(z)} +(1-a)\tilde g_t(z) -1 \right] dt
+ \left[1-\tilde g_t(z) \right] dB_t
\end{equation}
 For ease of notation, and because it does not concern us at present, we have also denoted the time-changed flow by $\{\tilde g_t(z), \; t\ge 0\}$. Furthermore, it is shown in detail in~\cite{triangles} that for all $\kappa>0$, the time-changed stochastic flow $\{\tilde g_t(z), \; t\ge 0\}$ given by~(\ref{gtildesde}) does not explode in finite time (wp1).

 Therefore, if $F$ is an analytic function on $\H$ such that $\{F(\tilde g_t(z)), \; t \ge 0\}$ is a local martingale, then It\^o's formula (at $t=0$) implies that $F$ must be a solution to the differential equation
\begin{equation}\label{Fode}
w(1-w)F''(w) +[2a-(2-2a)w]F'(w)=0.
\end{equation}
An explicit solution to~(\ref{Fode}) is given by
\begin{equation}\label{nov18eq1}
F(w) =\frac{\Gamma(2a)}{\Gamma(1-2a)\Gamma(4a-1)}  \int_0^w \zeta^{-2a}(1-\zeta)^{4a-2} d\zeta
\end{equation}
which is normalized so that $F(0)=0$ and $F(1)=0$. Note that~(\ref{nov18eq1}) is a
Schwarz-Christoffel transformation of  the upper half plane onto the isosceles triangle whose interior angles are $(1-2a)\pi$, $(1-2a)\pi$, and $(4a-1)\pi$.  The boundary values $F(0)=0$ and $F(1)=1$ imply that two of the vertices of the triangle are at $0$ and $1$, and from~(\ref{nov18eq1}) we conclude that the third vertex of the triangle is at
$$F(\infty) =  \frac{\Gamma(2a) \Gamma(1-2a)}{\Gamma(2-4a)\Gamma(4a-1)}  e^{(1-2a)\pi i}$$
which follows from~(6.2.1) and~(6.2.2) of~\cite{AbSteg}.
Furthermore, using (15.1.7),~(15.1.20),~(6.1.15), and~(6.1.18) of~\cite{AbSteg}, one can show that
$$2\cos((1-2a)\pi) =  \frac{\Gamma(2a) \Gamma(1-2a)}{\Gamma(2-4a)\Gamma(4a-1)}$$
from which it follows that $\Re(F(\infty)) \ge 0$ and that $|F(\infty)-1|=1$ as is to be expected for this isosceles triangle. The image of $\H$ under  $F$ is illustrated in Figure~\ref{figure7.triangle}.
\begin{figure}[h]
\begin{center}
\includegraphics[height=2.2in]{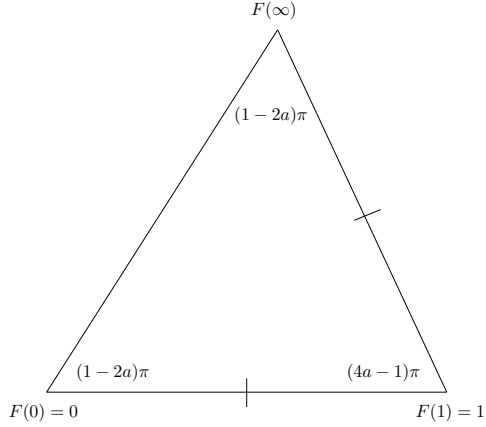}
\caption{The isosceles triangle with vertices at $0$, $1$, and $F(\infty)$.}\label{figure7.triangle}
\end{center}
\end{figure}

We now apply the optional sampling theorem to the martingale $F(\tilde g_{t\wedge T_z \wedge T_1}(z))$ to find (see the discussion surrounding Proposition~1 of~\cite{triangles}) that for $z \in \H$,
\begin{align}\label{jul7.eq1}
F(\tilde g_0(z)) = F(z) &= F(0)P\{T_z < T_1\} +F(1)P\{T_z = T_1\}+F(\infty)P\{T_z > T_1\}\notag\\
&= P\{T_z = T_1\}+F(\infty)P\{T_z > T_1\}.
\end{align}
Consequently, identifying the imaginary and real parts of the previous equation~(\ref{jul7.eq1}) implies that
$$\Re\{F(z)\} =  P\{T_z = T_1\}+\Re\{F(\infty)\}P\{T_z > T_1\}.$$
Since $\Re\{F(\infty)\} \ge 0$, we conclude $P\{T_z = T_1\} \le \Re\{F(z)\} \le |F(z)|$.

But now integrating along the straight line from $0$ to $z$ (i.e., letting $\theta=\arg(z)$, $\zeta = \rho e^{i\theta}$, $0 \le \rho \le |z|$) gives
\begin{align*}
|F(z)|
&= \frac{\Gamma(2a)}{\Gamma(1-2a)\Gamma(4a-1)} \left| \int_0^{|z|} (\rho e^{i\theta})^{-2a}(1-\rho e^{i\theta})^{4a-2} e^{i\theta}d\rho \right|\\
 &\le \frac{\Gamma(2a)}{\Gamma(1-2a)\Gamma(4a-1)}  \int_0^{|z|} \rho^{-2a}|1-\rho|^{4a-2} d\rho\\
&= 1 - \frac{\Gamma(2a)}{\Gamma(1-2a)\Gamma(4a-1)}  \int_{|z|}^1 \rho^{-2a}(1-\rho)^{4a-2}d\rho
\end{align*}
which relied on the fact that $4a-2<0$.

 If $z \in \Circle_r$
so that $0<1-\frac{3r}{2} \le |z| \le 1-\frac{r}{2} <1$ by definition, then
$$\int_{|z|}^1 \rho^{-2a}(1-\rho)^{4a-2}d\rho
\ge \int_{|z|}^1(1-\rho)^{4a-2} d\rho = \frac{(1-|z|)^{4a-1}}{4a-1} \ge \frac{2^{1-4a}}{4a-1}r^{4a-1}.$$
Hence,
$$P\{T_z = T_1\} \le |F(z)| \le 1 - c_a''r^{4a-1}$$
where
$$c''_a= \frac{2^{1-4a}\tilde c_a}{4a-1} \;\;\; \text{and} \;\;\; \tilde c_a=\frac{\Gamma(2a)}{\Gamma(1-2a)\Gamma(4a-1)}.$$
Taking the supremum of the previous expression over all $z \in \Circle_r$ gives us the required upper bound to~(\ref{jul18.eq1}). Hence, we have proved the following theorem.

\begin{theorem}\label{the.theorem.general}
Let $0<r \le 1/3$. If $\gamma:[0,\infty) \to \overline{\H}$ is a chordal SLE$_{\kappa}$ in $\H$ from 0 to $\infty$ with $4 < \kappa < 8$ and $a=2/\kappa$, then there exist constants $c'_a$ and $c''_a$ such that
$$1- c'_a r^{4a-1} \le \inf_{z \in \Circle_r} P\{T_z = T_1\} \le \sup_{z \in \Circle_r} P\{T_z = T_1\} \le   1 - c''_a r^{4a-1}$$
where
$$\Circle_r = \Halfcircle\left(1-r; \frac{r}{2} \right) = \left\{z \in \H : \left|z-1+r \right| = \frac{r}{2}\right\}$$
denotes the circle of radius $r/2$ centred at $1-r$ in the upper half plane as in~(\ref{circleR}).
\end{theorem}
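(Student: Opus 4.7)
The plan is to prove the two bounds separately, using quite different methods. For the lower bound, the key observation is that if $\gamma[0,\infty)$ avoids the nested semicircle $\Circle'_r$ entirely, then by planarity every $z\in \Circle_r$ (which lies inside $\Circle'_r \cup \R$) is forced to be swallowed at the exact same moment as the point $1$, since the connected component of $\H\setminus K_t$ containing $\Circle_r$ must also contain $1$ until the curve separates them from $\infty$. Thus
\begin{equation*}
\inf_{z\in\Circle_r}P\{T_z=T_1\}\ \ge\ P\{\gamma[0,\infty)\cap \Circle'_r=\emptyset\},
\end{equation*}
and Corollary~\ref{the.corollary} applied to $\Circle'_r$ (which has radius $3r/4$ centred at $1-3r/4$, so its ratio of radius to centre is comparable to $r$) gives an upper bound $C r^{4a-1}$ on the complementary intersection probability. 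Rearranging yields the desired $1-c'_a r^{4a-1}$ bound.

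For the upper bound I would follow Dub\'edat's martingale approach. First I would normalize by considering $\tilde g_t(z) = (g_t(z)+B_t)/(g_t(1)+B_t)$, which fixes the tip at $0$, the marked boundary point at $1$, and $\infty$. Computing its It\^o differential and then performing the time change $\sigma(t)=\int_0^t (g_s(1)+B_s)^{-2}\,ds$ removes the random prefactor and yields the autonomous SDE~(\ref{gtildesde}). Next I would look for analytic $F$ on $\H$ making $F(\tilde g_t(z))$ a local martingale; setting the drift to zero gives the second-order ODE~(\ref{Fode}), whose solution normalized by $F(0)=0$, $F(1)=1$ is the Schwarz--Christoffel integral~(\ref{nov18eq1}) mapping $\H$ conformally onto an isosceles triangle with vertices $0$, $1$, $F(\infty)$.

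With $F$ in hand I would apply optional sampling to $F(\tilde g_{t\wedge T_z\wedge T_1}(z))$. The three possible exit scenarios $T_z<T_1$, $T_z=T_1$, $T_z>T_1$ correspond to the three vertices of the triangle, which gives the identity $F(z)=P\{T_z=T_1\}+F(\infty)P\{T_z>T_1\}$. Taking real parts and using $\Re F(\infty)\ge 0$ (a property of the triangle computed via the gamma-function identities in~\cite{AbSteg}) bounds $P\{T_z=T_1\}\le |F(z)|$. To estimate $|F(z)|$ for $z\in \Circle_r$ I would integrate the defining integrand along the straight ray from $0$ to $z$, pass absolute values inside, and exploit the fact that $4a-2<0$ to get the clean bound
\begin{equation*}
|F(z)|\ \le\ 1-\tilde c_a\int_{|z|}^{1}\rho^{-2a}(1-\rho)^{4a-2}\,d\rho.
\end{equation*}
For $z\in \Circle_r$ one has $1-|z|\ge r/2$, so dropping the $\rho^{-2a}$ factor and integrating the remaining $(1-\rho)^{4a-2}$ produces a term of order $r^{4a-1}$, yielding $1-c''_a r^{4a-1}$ uniformly in $z\in\Circle_r$ and completing the proof.

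The main obstacle I anticipate is the martingale step: guessing the right normalized flow $\tilde g_t$, verifying non-explosion of the time-changed SDE, and then recognizing that the resulting ODE has a Schwarz--Christoffel solution whose three vertex values encode precisely the trichotomy $\{T_z<T_1,\ T_z=T_1,\ T_z>T_1\}$. The rest (the lower bound, and the ray-integration estimate on $|F(z)|$) is comparatively routine once this geometric picture is in place.
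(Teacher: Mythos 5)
Your proposal is correct and follows essentially the same route as the paper: the lower bound via Corollary~\ref{the.corollary} applied to the enclosing semicircle $\Circle'_r$ together with the observation that avoiding $\Circle'_r$ forces simultaneous swallowing, and the upper bound via Dub\'edat's time-changed flow, the Schwarz--Christoffel martingale $F$ mapping $\H$ onto the isosceles triangle, optional sampling, and the ray-integration estimate $|F(z)|\le 1-\tilde c_a\int_{|z|}^1\rho^{-2a}(1-\rho)^{4a-2}\,d\rho$ with $1-|z|\ge r/2$ on $\Circle_r$. All the key steps match the paper's argument, including the use of $\Re F(\infty)\ge 0$ and the sign of $4a-2$.
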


This theorem now yields the following corollary.

\begin{corollary}\label{the.corollary.general}
Let $0<r \le 1/3$. If $\gamma:[0,\infty) \to \overline{\H}$ is a chordal SLE$_{\kappa}$ in $\H$ from 0 to $\infty$ with $4 < \kappa < 8$ and $a=2/\kappa$, then there exist constants $c'_a$ and $c''_a$ such that
$$1- c'_a r^{4a-1} \le
P\{\text{$T_z = T_1$ for all $z \in \Circle_r$} \}
\le   1 - c''_a r^{4a-1}$$
where $\Circle_r$ is given by~(\ref{circleR}) as above.
\end{corollary}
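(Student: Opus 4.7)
The plan is to deduce both inequalities directly from Theorem~\ref{the.theorem.general} using only set inclusions; no new estimates are needed. The upper bound is immediate: for any fixed $z_0 \in \Circle_r$ we have
$$\{T_z = T_1 \text{ for all } z \in \Circle_r\} \subseteq \{T_{z_0} = T_1\},$$
so
$$P\{T_z = T_1 \text{ for all } z \in \Circle_r\} \le \inf_{z_0 \in \Circle_r} P\{T_{z_0} = T_1\} \le \sup_{z \in \Circle_r} P\{T_z = T_1\} \le 1 - c''_a r^{4a-1}$$
by Theorem~\ref{the.theorem.general}.

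For the lower bound I would observe that the geometric argument implicit in the proof of Theorem~\ref{the.theorem.general} actually produces the stronger uniform inclusion
$$\{\gamma[0,\infty) \cap \Circle'_r = \emptyset\} \;\subseteq\; \{T_z = T_1 \text{ for all } z \in \Circle_r\}.$$
The justification is topological. Let $D = \Halfdisk(1-3r/4; 3r/4)$; a direct computation using the definitions~(\ref{circleR}) and~(\ref{circleRprime}) shows $\Circle_r \subset D$, and clearly $1 \in \bd D$. Since $\gamma \subset \overline{\H}$, the curve can enter $D$ only by crossing the semicircular arc $\Circle'_r$, so on the event $\{\gamma \cap \Circle'_r = \emptyset\}$ the open half-disk $D$ is disjoint from every hull $K_t$. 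For each $t < T_1$ the point $1$ admits a neighborhood in $\overline{\H} \setminus K_t$, and this neighborhood meets $D$; since $D$ is connected and disjoint from $K_t$, the whole of $D$ lies in the unbounded component of $\H \setminus K_t$. Hence $T_z \ge T_1$ for every $z \in D$. Conversely, at time $T_1$ the curve separates $1$ from $\infty$, and since $D$ abuts $1$ on the real axis, $D$ is cut off from $\infty$ at the same instant, giving $T_z \le T_1$ for every $z \in D$.

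Combining this inclusion with the bound $P\{\gamma \cap \Circle'_r = \emptyset\} \ge 1 - c'_a r^{4a-1}$ established at the start of the proof of Theorem~\ref{the.theorem.general} (which itself comes from Corollary~\ref{the.corollary}) completes the lower bound. The one step that requires care is the topological assertion that no point of $D$ is swallowed strictly before $T_1$ on $\{\gamma \cap \Circle'_r = \emptyset\}$; this is the main obstacle, but it is precisely the planar-topology argument already used to prove the pointwise bound $\inf_{z \in \Circle_r} P\{T_z = T_1\} \ge 1 - c'_a r^{4a-1}$ in Theorem~\ref{the.theorem.general}, now read as the uniform statement it really is.
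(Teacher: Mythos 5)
Your proposal is correct and follows essentially the same route as the paper: the upper bound by reducing to a single point $z_0\in\Circle_r$ and invoking the $\sup$ estimate of Theorem~\ref{the.theorem.general}, and the lower bound via the event inclusion $\{\gamma[0,\infty)\cap\Circle'_r=\emptyset\}\subseteq\{T_z=T_1 \text{ for all } z\in\Circle_r\}$ combined with Corollary~\ref{the.corollary}. The only difference is that you spell out the planar-topology justification of that inclusion, which the paper simply declares to be clear; your elaboration is sound.
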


\begin{proof}
Let $z_0=  1-r + \frac{ir}{2}$ so that  $z_0 \in \Circle_r$. Theorem~\ref{the.theorem.general} implies that there exists a constant $c''_a$ such that
\begin{equation}\label{cor2.eq1}
P\{\text{$T_z = T_1$ for all $z \in \Circle_r$} \} \le P\{T_{z_0} = T_1\} \le  \sup_{z \in \Circle_r} P\{T_z = T_1\} \le   1 - c''_a r^{4a-1}.
\end{equation}
As noted earlier, it follows from Corollary~\ref{the.corollary} that
$P\{\gamma[0,\infty) \cap \Circle'_r \neq \emptyset\} \asymp r^{4a-1}$
where $\Circle'_r$ is given by~(\ref{circleRprime}),
and so
there exists a constant $c'_a$ such that
\begin{equation}\label{cor2.eq2}
P\{\text{$T_z = T_1$ for all $z \in \Circle_r$} \} \ge P\{ \gamma[0,\infty) \cap \Circle'_r = \emptyset\} \ge 1- c'_a r^{4a-1}.
\end{equation}
Taking~(\ref{cor2.eq1}) and~(\ref{cor2.eq2}) together completes the proof.
\end{proof}

\section*{Acknowledgements}

This paper had its origins at \emph{Workshop on Random Shapes} held at the Institute for Pure \& Applied Mathematics in March 2007, and was completed during the 2007 \emph{IAS/Park City Mathematics Institute on Statistical Mechanics}. The authors would like to thank both organizations for partial financial support, Peter Jones who organized the IPAM workshop, Scott Sheffield and Tom Spencer who organized the PCMI program, as well as Greg Lawler, Christophe Garban, and Ed Perkins who provided us with a number of valuable comments.


\begin{thebibliography}{00}

\bibitem{AbSteg}
M.~Abramowitz and I.~A.~Stegun, editors.
\newblock {\em Handbook of Mathematical Functions with Formulas, Graphs, and Mathematical Tables}.
\newblock National Bureau of Standards, Washington, DC, 1972.

\bibitem{beffara}
V.~Beffara.
\newblock The dimension of the SLE curves.
\newblock To appear, {\em Ann. Probab.}

\bibitem{triangles}
J.~Dub\'edat.
\newblock  SLE and triangles.
\newblock {\em Electron. Comm. Probab.}, 8:28--42, 2003.

\bibitem{GT}
C.~Garban and J.~A.~Trujillo Ferreras.
\newblock The expected area of the filled planar Brownian loop is $\pi/5$.
\newblock {\em Comm. Math. Phys.}, 264:797--810, 2006.

\bibitem{Kennedy03}
T.~Kennedy.
\newblock Monte Carlo Tests of Stochastic Loewner Evolution Predictions for the 2D Self-Avoiding Walk.
\newblock {\em Phys. Rev. Lett.}, 88:130601, 2003.

\bibitem{SLEbook}
G.~F.~Lawler.
\newblock {\em Conformally Invariant Processes in the Plane}.
\newblock American Mathematical Society, Providence, RI, 2005.

\bibitem{RS}
S.~Rohde and O.~Schramm.
\newblock Basic properties of SLE.
\newblock {\em Ann. Math.}, 161:883--924, 2005.

\bibitem{Schramm}
O.~Schramm.
\newblock A percolation formula.
\newblock {\em Electron. Comm. Probab.}, 6:115--120, 2001.


\end{thebibliography}
\end{document}